\definecolor{verylight}{gray}{0.97}
\definecolor{light}{gray}{0.9}
\definecolor{medium}{gray}{0.85}
\definecolor{dark}{gray}{0.6}
 \def\G{{\mathcal G}}
  \def\Mc{{\mathcal M}}
 \def\opn#1#2{\def#1{\operatorname{#2}}} 
 \opn\chara{char} \opn\length{\ell} \opn\pd{pd} \opn\rk{rk}
 \opn\projdim{proj\,dim} \opn\injdim{inj\,dim} \opn\rank{rank}
 \opn\depth{depth} \opn\grade{grade} \opn\height{height}
 \opn\embdim{emb\,dim} \opn\codim{codim}
 \opn\Tr{Tr} \opn\bigrank{big\,rank}
 \opn\superheight{superheight}\opn\lcm{lcm}
 \opn\trdeg{tr\,deg}
 \opn\reg{reg} \opn\lreg{lreg} \opn\ini{in} \opn\lpd{lpd}
 \opn\size{size} \opn\sdepth{sdepth}
 \opn\link{link}\opn\fdepth{fdepth}\opn\lex{lex}
 \opn\tr{tr}
 \opn\type{type}
 \opn\gap{gap}
 \opn\arithdeg{arith-deg}
 \opn\revlex{revlex}
 \opn\div{div} \opn\Div{Div} \opn\cl{cl} \opn\Cl{Cl}
 \opn\Spec{Spec} \opn\Supp{Supp} \opn\supp{supp} \opn\Sing{Sing}
 \opn\Ass{Ass} \opn\Min{Min}\opn\Mon{Mon}
 \opn\Ann{Ann} \opn\Rad{Rad} \opn\Soc{Soc}
 \opn\Im{Im} \opn\Ker{Ker} \opn\Coker{Coker} \opn\Am{Am}
 \opn\Hom{Hom} \opn\Tor{Tor} \opn\Ext{Ext} \opn\End{End}
 \opn\Aut{Aut} \opn\id{id}
 \opn\nat{nat}
 \opn\pff{pf}
 \opn\Pf{Pf} \opn\GL{GL} \opn\SL{SL} \opn\mod{mod} \opn\ord{ord}
 \opn\Gin{Gin} \opn\Hilb{Hilb}\opn\sort{sort}
 \opn\PF{PF}\opn\Ap{Ap}
 \opn\mult{mult}
 \opn\bight{bight}
 \opn\aff{aff}
 \opn\relint{relint} \opn\st{st}
 \opn\lk{lk} \opn\cn{cn} \opn\core{core} \opn\vol{vol}  \opn\inp{inp} \opn\nilpot{nilpot}
 \opn\link{link} \opn\star{star}\opn\lex{lex}\opn\set{set}
 \opn\width{wd}
 \opn\Fr{F}
 \opn\QF{QF}
 \opn\G{G}
 \opn\type{type}\opn\res{res}
 \opn\conv{conv}
 \opn\Deg{Deg}
 \opn\Sym{Sym}
 \opn\gr{gr}
 \def\pot#1#2{#1[\kern-0.28ex[#2]\kern-0.28ex]}
 \opn\dirlim{\underrightarrow{\lim}}
 \opn\inivlim{\underleftarrow{\lim}}
 \let\to=\rightarrow
 \def\Implies{\ifmmode\Longrightarrow \else
         \unskip${}\Longrightarrow{}$\ignorespaces\fi}
 \def\implies{\ifmmode\Rightarrow \else
         \unskip${}\Rightarrow{}$\ignorespaces\fi}
 \def\iff{\ifmmode\Longleftrightarrow \else
         \unskip${}\Longleftrightarrow{}$\ignorespaces\fi}
 \newtheorem{Theorem}{Theorem}[section]
 \newtheorem{Lemma}[Theorem]{Lemma}
 \newtheorem{Corollary}[Theorem]{Corollary}
 \newtheorem{Remark}[Theorem]{Remark}
 \newtheorem{Example}[Theorem]{Example}
 \let\epsilon\varepsilon
 \let\kappa=\varkappa
 \def\qed{\ifhmode\textqed\fi
       \ifmmode\ifinner\quad\qedsymbol\else\dispqed\fi\fi}
 \def\textqed{\unskip\nobreak\penalty50
        \hskip2em\hbox{}\nobreak\hfil\qedsymbol
        \parfillskip=0pt \finalhyphendemerits=0}
 \def\dispqed{\rlap{\qquad\qedsymbol}}
 \opn\dis{dis}
 \def\pnt{{\raise0.5mm\hbox{\large\bf.}}}
 \opn\Lex{Lex}
\begin{document}

\title{Toric rings arising from vertex cover ideals}

\author {J\"urgen Herzog, Takayuki Hibi  and  Somayeh Moradi }

\address{J\"urgen Herzog, Fachbereich Mathematik, Universit\"at Duisburg-Essen, Campus Essen, 45117
Essen, Germany} \email{juergen.herzog@uni-essen.de}

\address{Takayuki Hibi, Department of Pure and Applied Mathematics,
Graduate School of Information Science and Technology, Osaka
University, Suita, Osaka 565-0871, Japan}
\email{hibi@math.sci.osaka-u.ac.jp}

\address{Somayeh Moradi, Department of Mathematics, School of Science, Ilam University,
P.O.Box 69315-516, Ilam, Iran}
\email{so.moradi@ilam.ac.ir}

\dedicatory{ }
 \keywords{sortable, toric rings, $x$-condition, vertex cover ideals}
 \subjclass[2010]{Primary 05E40, 13A02;  Secondary 13P10}

\begin{abstract}
We extend the sortability concept to monomial ideals which are not necessarily generated in one degree and as an application we obtain normal Cohen-Macaulay toric rings attached to vertex cover ideals of graphs. Moreover, we consider a construction on a graph called a clique multi-whiskering which always produces vertex cover ideals with componentwise linear powers.
\end{abstract}

\maketitle

\setcounter{tocdepth}{1}

\section*{Introduction}
Let $S=K[x_1,\ldots,x_n]$ be the polynomial ring in $n$ variables over a field $K$.
A sortable set of monomials in $S$ was first considered in~\cite{Sturmfels}, for a set of monomials of the same degree. This is an interesting property in the study of toric rings. In fact the defining ideal of a toric ring generated by a sortable set of monomials has a quadratic Gr\"{o}bner basis which is formed by sorting relations. This in particular implies that such toric rings are Koszul and normal Cohen-Macaulay domains.

In this paper we consider the concept of sortability more generally for a set of monomials in arbitrary degrees and study the toric rings of the form $A=K[u_1t,\ldots,u_mt]$, where $\{u_1,\ldots,u_m\}$ is a sortable set of monomials. Using a criterion by Sturmfels~\cite{Sturmfels} we show that in this more general case, the sorting relations form a quadratic Gr\"{o}bner basis for the defining ideal of $A$, as well.  This allows us to find Koszul and normal Cohen-Macaulay toric rings, whose generators are in arbitrary degrees. Such toric rings appear when we consider vertex cover ideals of graphs and the generators of the toric ring correspond to the minimal vertex covers of a graph.
In particular, when $G$ is a proper interval graph (also known as closed graphs), then the vertex cover ideal of $G$ is sortable and hence the toric ring attached to the minimal vertex covers of $G$ possess the above mentioned properties (see Theorem~\ref{Properinterval}).

A toric ring attached to an ideal $I$, which plays an important role in the study of powers of $I$, is the Rees ring $\mathcal{R}(I)$. In this regard, the so-called $x$-condition which is a criterion on the defining ideal of a Rees ring, is a strong tool to check whether an ideal has linear or componentwise linear powers.
For any graded ideal generated in one degree, the $x$-condition implies that all powers of $I$ have linear resolutions (see ~\cite{HHZ}).
In~\cite{HHM} a natural extension of the $x$-condition was given to arbitrary graded ideals and modules which under some additional assumptions guarantees that all powers of the graded ideal are componentwise linear. This was shown to be the case for vertex cover ideals of some families of graphs, namely biclique graphs, path graphs and Cameron-Walker graphs, whose bipartite part is a complete bipartite graph.

Few families of graphs have been known so that their vertex cover ideals  have componentwise linear powers.
Cohen-Macaulay chordal graph~\cite{HHO}, trees~\cite{Kumar}, $(C_4, 2K_2)$-free graphs~\cite{E}, star graphs based on $K_n$~\cite{HHO} and generalized star graphs~\cite{Moh} are such families. In this paper we attach to an arbitrary graph $G$ a family of graphs called clique multi-whiskerings of $G$. Considering the Rees rings of their vertex cover ideals and using the $x$-condition we show that all powers of vertex cover ideals of such graphs are componentwise linear (see Lemma~\ref{x condition} and Theorem~\ref{powers}).

\section{Toric rings attached to sortable sets of monomials}

In this section we study the Gr\"{o}bner basis of toric rings generated by sortable sets of monomials.

Throughout this section $S=K[x_{1},\ldots, x_{n}]$ is a polynomial ring over a field $K$. Let $u$ and $v$ be two monomials in $S$. Write $uv=x_{i_1}x_{i_2}\cdots x_{i_{t}}$ with $i_1\leq i_2\leq \cdots\leq i_{t}$, and
set $u^\prime=x_{i_1}x_{i_3}\cdots x_{i_{t'}}$ and $ v^\prime =x_{i_{2}}x_{i_4}\cdots x_{i_{t''}}$, where $t'$ and $t''$ are the biggest odd and even number with $t'\leq t$ and $t''\leq t$, respectively. The pair
$(u^\prime,v^\prime)$ is called the \emph{sorting} of $(u,v)$ and is denoted by $\sort(u,v).$
For example $\sort(x_1x_2x_4x_5,x_3)=(x_1x_3x_5,x_2x_4)$.

The pair $(u,v)$ is called a {\em sorted pair},  if $\sort(u,v)=(u,v)$.  Otherwise,  $(u,v)$ is called an {\em unsorted pair}. Observe that if $\sort(u,v)=(u',v')$, then  $uv=u'v'$ and we have either $\deg u'=\deg v'$ or $\deg u'=\deg v'+1$. Moreover, if $u$ and $v$ are squarefree, then $u'$ and $v'$ are squarefree, as well.
A subset $\Mc\subset \Mon(S)$ is called \emph{sortable} if $\sort(u,v)\in \Mc\times \Mc$ for all $(u,v)\in \Mc\times \Mc.$

\medskip

A $q$-tuple $(u_1,\ldots,u_q)$ of monomials in $S$ is called {\em sorted}, if $(u_i,u_j)$ is a sorted pair for any $i<j$.
If $(u_1,\ldots,u_q)$ is sorted, then for some $0\leq r<q$, $\deg(u_1)=\deg(u_2)=\cdots=\deg(u_r)=d+1$, and $\deg(u_i)=d$ for any $r<i\leq q$. Then we say that $(u_1,\ldots,u_q)$ is {\em sorted of type} $(d,r)$.

For each $i$ assume that $u_i=x_{\ell_{1,i}}x_{\ell_{2,i}}\cdots x_{\ell_{d_i, i}}$, where $\ell_{1,i}\leq \cdots \leq \ell_{d_i,i}$.
Then $(u_1,\ldots,u_q)$ is sorted of type $(d,r)$ if and only if $d_1=\cdots=d_r=d+1$, $d_{r+1}=\cdots=d_q=d$, and

\begin{eqnarray*}
\ell_{1,1}\leq \ell_{1,2}\leq \cdots\leq \ell_{1,q}\leq \ell_{2,1}\leq \cdots \leq\ell_{2,q}\leq \cdots \leq \\
\ell_{d,1}\leq \cdots \leq \ell_{d,q}
& \leq \ell_{d+1,1} \leq\cdots \leq \ell_{d+1,r}.
\end{eqnarray*}

\begin{Lemma}\label{qsorted}
Let $(u_1,\ldots,u_q)$ be an arbitrary $q$-tuple of monomials in $S^q$ and let $m=\sum_{i=1}^q \deg(u_i)$. Then
there exists a unique sorted $q$-tuple $(u'_1,\ldots,u'_q)$ such that $u_1\cdots u_q=u'_1\cdots u'_q$. Moreover, it is of type $(d,r)$, where $d>0$ and $0\leq r<q$ are the unique integers with  $m=qd+r$.
\end{Lemma}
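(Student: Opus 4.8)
The plan is to argue by induction on $q$, using the (already established) binary sorting operation as the base case. For $q=2$ the claim is essentially the observation recorded just before the definition of sorted $q$-tuples: $\sort(u_1,u_2)=(u_1',u_2')$ satisfies $u_1u_2=u_1'u_2'$, the pair $(u_1',u_2')$ is sorted, and writing $m=2d+r$ with $r\in\{0,1\}$, one has $\deg u_1'=d+r$ and $\deg u_2'=d$, so the type is exactly $(d,r)$; uniqueness for $q=2$ is clear since a sorted pair of given product and given degree sequence is determined by listing the variables of the product in increasing order and distributing them alternately.

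For the inductive step, suppose the statement holds for $q-1$. Given $(u_1,\dots,u_q)$, first produce a candidate: write the product $u_1\cdots u_q=x_{j_1}\cdots x_{j_m}$ with $j_1\le\cdots\le j_m$, let $m=qd+r$ with $0\le r<q$, and define $u_i'$ for $i=1,\dots,q$ by taking, in round-robin fashion, the variables in positions $\equiv i \pmod q$; concretely $u_i'$ receives $x_{j_i},x_{j_{i+q}},x_{j_{i+2q}},\dots$, so that $u_1',\dots,u_r'$ have degree $d+1$ and $u_{r+1}',\dots,u_q'$ have degree $d$. I would check directly from this description that the resulting tuple satisfies the displayed chain of inequalities characterizing "sorted of type $(d,r)$": the $k$-th variable of $u_i'$ is $x_{j_{(k-1)q+i}}$, and these indices increase as we read off $\ell_{1,1},\ell_{1,2},\dots,\ell_{1,q},\ell_{2,1},\dots$ precisely because we are reading the sorted list $j_1\le j_2\le\cdots$ in order. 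This shows existence, and gives the asserted type.

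For uniqueness, suppose $(v_1,\dots,v_q)$ is another sorted tuple with $v_1\cdots v_q=u_1\cdots u_q$. A sorted tuple of type $(d',r')$ has total degree $q d'+r'$ with $0\le r'<q$, so $qd'+r'=m=qd+r$ forces $d'=d$, $r'=r$; thus $(v_1,\dots,v_q)$ has the same type. Now I claim that, reading the variables of a sorted $q$-tuple of type $(d,r)$ ``column by column'' as in the displayed inequalities yields exactly the increasing rearrangement of the product, hence $v_i$ must receive the variables in positions $\equiv i\pmod q$, i.e. $v_i=u_i'$. This is immediate from the defining chain of inequalities: it says precisely that the concatenation $\ell_{1,1},\dots,\ell_{1,q},\ell_{2,1},\dots,\ell_{d,q},\ell_{d+1,1},\dots,\ell_{d+1,r}$ is nondecreasing, and it has length $m$, so it \emph{is} the sorted index list $j_1\le\cdots\le j_m$; matching positions recovers each $v_i$ uniquely.

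The only subtle point — and the one I would be most careful about — is making the bookkeeping between the ``column-reading order'' of a sorted tuple and the round-robin assignment completely precise when the degrees are unequal (the $r$ tuples of degree $d+1$ versus the $q-r$ of degree $d$): one must verify that the tail $\ell_{d+1,1},\dots,\ell_{d+1,r}$ slots in correctly after $\ell_{d,q}$ and that no index is double-counted or omitted. Once the indexing $u_i'\ni x_{j_{(k-1)q+i}}$ for $1\le k\le d+[i\le r]$ is written down, both existence (it satisfies the inequalities) and uniqueness (any sorted tuple of the right type must agree with it) follow by directly reading off the same chain of inequalities, so I do not expect a genuine obstacle beyond this indexing care. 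Note that the induction on $q$ is actually dispensable given this explicit formula; I would present the direct argument, mentioning the $q=2$ case only as the motivating special case recovering $\sort$.
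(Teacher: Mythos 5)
Your argument is correct and follows essentially the same route as the paper: degrees are forced to be $d$ and $d+1$ by sortedness together with $m=qd+r$, and the sorted tuple is obtained by writing $u_1\cdots u_q$ with its variable indices in nondecreasing order and distributing them round-robin, $u'_i=\prod_s x_{k_{sq+i}}$, with uniqueness read off from the defining chain of inequalities. The only difference is that you spell out the uniqueness step more explicitly than the paper does, which is fine; the proposed induction on $q$ is, as you note, unnecessary.
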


\begin{proof}
For the desired sorted $q$-tuple $(u'_1,\ldots,u'_q)$, we must have $0 \leq \deg(u'_i)-\deg(u'_j)\leq 1$ for $i<j$, since $(u'_i,u'_j)$ should be sorted.
So if $\deg(u'_q)=d$, then $d\leq \deg(u'_i)\leq d+1$ for all $i$. This together with the equality $\sum_{i=1}^q \deg(u'_i)=qd+r$, imply that $\deg(u'_i)=d+1$ for $1\leq i\leq r$, and $\deg(u'_i)=d$ for $r+1\leq i\leq q$. Now, write $u_1\cdots u_q$ as $\prod_{i=1}^{m} x_{k_i}$ with $k_1\leq k_2\leq \cdots\leq k_m$. Then the unique sorted $q$-tuple is obtained by setting $u'_i=\prod_{s=0}^{d} x_{k_{sq+i}}$ for $1\leq i\leq r$ and $u'_i=\prod_{s=0}^{d-1} x_{k_{sq+i}}$ for $r+1\leq i\leq q$.
\end{proof}

We denote the unique sorted $q$-tuple $(u'_1,\ldots,u'_q)$ in Lemma~\ref{qsorted} by $\sort(u_1,\ldots,u_q)$. Observe that for any permutation $\sigma$ on $[q]$, we have $$\sort(u_1,\ldots,u_q)=\sort(u_{\sigma(1)},\ldots,u_{\sigma(q)}).$$

Let $B$ be a sortable set of monomials in $S$. We are interested in the Gr\"{o}bner basis of the toric ring $K[B]=K[u:\ u\in B]$. Let $T=K[y_u:\ u\in B]$ be the polynomial ring with the order of indeterminates given by $y_u>y_v$ if $u>_{\lex}v$.
Let $$\varphi:T\to K[B]$$ be the $K$-algebra homomorphism defined by $\varphi(y_u)=u$ for any $u\in B$ and let $P_B$ be
the kernel of $\varphi$. We show that there exists a suitable monomial order on $T$ called the sorting order, so that $P_B$ possesses a quadratic Gr\"{o}bner basis. To this aim we need to prove the following

\begin{Lemma}\label{sortfinitely}
With the assumptions of Lemma~\ref{qsorted}, the unique sorted $q$-tuple can be obtained from $(u_1,\ldots,u_q)$ in a finite number of single sorting steps.
\end{Lemma}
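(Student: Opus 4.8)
The plan is to attach to a $q$-tuple of monomials a combinatorial invariant that strictly decreases under every single sorting step and can take only finitely many values; then the process of repeated sorting must stop, and when it stops no pair is unsorted, so the tuple is a sorted $q$-tuple, which by Lemma~\ref{qsorted} is $\sort(u_1,\ldots,u_q)$.

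First a reduction. Since $\sort(v_i,v_j)=(v_i',v_j')$ with $v_iv_j=v_i'v_j'$, a single sorting step does not change the product $v_1\cdots v_q$, hence not the multiset $\Lambda$ of indices of the variables dividing it; moreover $\deg v_i'=\lceil(\deg v_i+\deg v_j)/2\rceil\le\max(\deg v_i,\deg v_j)$, so the number $h:=\max_\ell\deg u_\ell$ is never exceeded. Fix this $h$. To a $q$-tuple $(v_1,\ldots,v_q)$ with all $\deg v_\ell\le h$ I associate a word $W(v_1,\ldots,v_q)$ of length $hq$ over the alphabet $\{1,\ldots,n\}\cup\{\infty\}$, where $\infty$ is declared larger than every integer: writing $v_\ell=x_{a_{1,\ell}}\cdots x_{a_{\deg v_\ell,\ell}}$ with $a_{1,\ell}\le\cdots\le a_{\deg v_\ell,\ell}$ and setting $a_{s,\ell}=\infty$ for $\deg v_\ell<s\le h$, put
\[ W(v_1,\ldots,v_q)=(a_{1,1},\ldots,a_{1,q},\,a_{2,1},\ldots,a_{2,q},\,\ldots,\,a_{h,1},\ldots,a_{h,q}). \]
Its multiset of entries is $\Lambda$ together with $hq-\sum_\ell\deg u_\ell$ copies of $\infty$, the same for every tuple reachable by sorting steps, so $W$ takes values in a finite set, which I order lexicographically.

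The heart of the proof is the claim that a single sorting step strictly decreases $W$. Suppose the step sorts the unsorted pair in positions $i<j$. The words $W$ and $W'$ of the old and new tuples agree in every position not coming from columns $i$ or $j$, so it suffices to compare the subwords $W^{ij}=(a_{1,i},a_{1,j},a_{2,i},a_{2,j},\ldots,a_{h,i},a_{h,j})$ and its analogue $(W^{ij})'$. The entries of $W^{ij}$ in odd positions are the padded $i$-th column and those in even positions the padded $j$-th column, so $W^{ij}$ determines the pair $(v_i,v_j)$; and $W^{ij},(W^{ij})'$ are rearrangements of one and the same multiset $N$, namely the variable-indices of $v_iv_j$ together with $(h-\deg v_i)+(h-\deg v_j)$ copies of $\infty$. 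Writing $v_iv_j=v_i'v_j'=x_{c_1}\cdots x_{c_t}$ with $c_1\le\cdots\le c_t$, the definition of $\sort$ gives that the new $i$-th and $j$-th columns are the paddings of $v_i'=x_{c_1}x_{c_3}\cdots$ and $v_j'=x_{c_2}x_{c_4}\cdots$, so reading them row by row produces $(c_1,c_2,\ldots,c_t,\infty,\ldots,\infty)$, the nondecreasing---hence lexicographically smallest---rearrangement of $N$. Since $W^{ij}=(W^{ij})'$ would force $(v_i,v_j)=\sort(v_i,v_j)$, contradicting that the pair is unsorted, $W^{ij}$ is a larger rearrangement of $N$; thus $(W^{ij})'<_{\lex}W^{ij}$, and therefore $W'<_{\lex}W$.

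Consequently, any run of single sorting steps starting from $(u_1,\ldots,u_q)$ is a strictly decreasing sequence in a finite totally ordered set, so it is finite; its last tuple has no unsorted pair, hence equals the sorted $q$-tuple $\sort(u_1,\ldots,u_q)$ of Lemma~\ref{qsorted}. The only genuine choice in this argument---and the point where care is needed---is to pad the short columns with a symbol \emph{larger} than all variables: that is exactly what makes the output of $\sort$ on a pair the lexicographically least admissible rearrangement of the relevant multiset (padding with $0$, say, would destroy the monovariance), and once $W$ is set up this way the key step is the short computation above.
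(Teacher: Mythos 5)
Your proof is correct, and it is structured differently from the paper's. The paper argues in two stages with two separate integer potentials: first the function $f(\mathbf{w})=\sum_{i<j}|\deg w_i-\deg w_j|$, which is shown to drop under a \emph{particular} strategy (sorting a maximal-degree against a minimal-degree entry) until all degrees differ by at most one, and then, on tuples of type $(d,r)$, the function $g(\mathbf{u})=\sum_{i<j}(k_j-k_i)$ built from the row-major reading of the variable indices, which strictly increases under any single sorting and is bounded above. Your argument replaces both stages by a single monovariant: the row-major word $W$ padded with a top symbol $\infty$, compared lexicographically. The key computation — that sorting a pair turns the interleaved two-column subword into the nondecreasing, hence lex-least, rearrangement of its fixed multiset — is essentially the same phenomenon the paper exploits through $g$, but the $\infty$-padding lets you dispense with the degree-balancing preliminary altogether and treat arbitrary unsorted pairs uniformly; your observation that padding below would destroy monotonicity is exactly the right point of care. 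What your version buys is also slightly more than the lemma states: you show that \emph{every} sequence of single sorting steps terminates (in the sorted tuple), which is the Noetherianity actually invoked afterwards via Sturmfels' Theorem 3.12, whereas the paper's first stage, as written, only certifies termination for one chosen strategy; its second-stage argument for $g$ does give termination of arbitrary sequences once the degrees are balanced. So your proof is a valid, somewhat cleaner and stronger variant of the same underlying idea.
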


\begin{proof}
Let $\textbf{u}=(u_1,\ldots,u_q)$ and assume that $d_i=\deg(u_i)$. Then $m=\sum_{i=1}^q d_i=qd+r$. First we claim that after finitely many single sortings on $\textbf{u}$ we obtain $\textbf{v}=(v_1,\ldots,v_q)$ such that $|\deg(v_i)-\deg(v_j)|\leq 1$ for all $i$ and $j$.

If $|d_i-d_j|\leq 1$ for all $i$ and $j$, then there is nothing to prove and $v_i=u_i$ for all $i$. Assume that there exists $i$ and $j$ with $|d_i-d_j|>1$.
Let $\mathcal{S}$ be the set of all $q$-tuples of monomials and consider the function $f:\mathcal{S}\to \mathbb{Z}$ with $$f(w_1,\ldots,w_q)=\sum_{i<j} |\deg(w_i)-\deg(w_j)|.$$
Clearly $f\geq 0$. We show that if $d_s=\min\{d_i:\ 1\leq i\leq q\}$, $d_t=\max\{d_i:\ 1\leq i\leq q\}$ and $\textbf{u}'$ is a $q$-tuple obtained from
$\textbf{u}$ by sorting $(u_t,u_s)$, then $f(\textbf{u}')<f(\textbf{u})$. Once this is shown, since $f$ is bounded below, we can deduce that after finitely many steps we get a $q$-tuple $\textbf{v}$ with $|\deg(v_i)-\deg(v_j)|\leq 1$ for all $i$ and $j$.

Without loss of generality we may assume that $d_1\geq \cdots\geq d_q$.
Then $t=1$, $s=q$ and $d_1-d_q>1$.

Let $\sort(u_1,u_q)=(u',u'')$, $d'=\deg(u')$ and $d''=\deg(u'')$. Then $0\leq d'-d''\leq 1$ and $d'+d''=d_1+d_q$.

We have $$f(\textbf{u})=\sum_{i<j} (d_i-d_j)=\sum_{i,j\notin\{1,q\}, i<j} (d_i-d_j)+\sum_{s=2}^q (d_1-d_s) +\sum_{s=2}^q (d_s-d_q)$$ and
$$f(\textbf{u}')=\sum_{i,j\notin\{1,q\},i<j} (d_i-d_j)+\sum_{s=2}^{q-1} (|d'-d_s|+|d''-d_s|)+(d'-d'').$$
Thus we need to show that $\sum_{s=2}^{q-1} (|d'-d_s| +|d''-d_s|)+1<(q-1)(d_1-d_q)$.

\medskip
Fix an integer $2\leq s\leq q$. If $d_s\leq d''$, then $|d'-d_s| +|d''-d_s|=(d'+d'')-2d_s=d_1+d_q-2d_s\leq d_1-d_q$, since $d_s\geq d_q$. If
$d_s>d''$, then $d_s\geq d'$. So $|d'-d_s| +|d''-d_s|=2d_s-(d'+d'')=2d_s-(d_1+d_q)\leq d_1-d_q$, since $d_s\leq d_1$. Therefore,
$\sum_{s=2}^{q-1} (|d'-d_s| +|d''-d_s|)+1\leq (q-2)(d_1-d_q)+1<(q-1)(d_1-d_q)$. So the claim is proved.

By what shown above, in the sequel we may reduce to the case of a $q$-tuple $\textbf{u}$ with $\deg(u_1)=\cdots=\deg(u_r)=d+1$ and $\deg(u_i)=d$ for $r+1\leq i\leq q$.
Let $\mathcal{M}$ be the set of all $q$-tuples of monomials with this property.
Let $u_i=x_{s_{i,1}}\cdots x_{s_{i,d+1}}$ with $s_{i,1}\leq \cdots\leq s_{i,d+1}$ for $1\leq i\leq r$ and $u_i=x_{s_{i,1}}\cdots x_{s_{i,d}}$ with $s_{i,1}\leq \cdots\leq s_{i,d}$ for $r+1\leq i\leq q$. Write
\begin{equation}\label{s_i,j}
u_1\cdots u_q=(x_{s_{1,1}}x_{s_{2,1}}\cdots x_{s_{q,1}})(x_{s_{1,2}}x_{s_{2,2}}\cdots x_{s_{q,2}})\cdots (x_{s_{1,d}}\cdots x_{s_{q,d}})(x_{s_{1,d+1}}\cdots x_{s_{r,d+1}}).
\end{equation}
For each $i$ and $j$ set $k_{i+(j-1)q}=s_{i,j}$. Then equation (\ref{s_i,j}) is
\begin{equation}\label{k_i}
u_1\cdots u_q=(x_{k_1}\ldots x_{k_q})(x_{k_{q+1}}\cdots x_{k_{2q}})\cdots (x_{k_{dq+1}}\cdots x_{k_{dq+r}}).
\end{equation}
Define the function $g:\mathcal{M}\to \mathbb{Z}$ as $$g(\textbf{u})=\sum_{i<j} (k_j-k_i).$$
Then $g$ is bounded above by a suitable multiple of $n-1$.
So we only need to show that if for some $i<j$, $(u_i,u_j)$ is an unsorted pair with $\sort(u_i,u_j)=(u'_i,u'_j)$ and $\textbf{u}'$ is the $q$-tuple obtained from $\textbf{u}$ by this single sorting, then $g(\textbf{u})<g(\textbf{u}')$. Let $u_iu_j=x_{k_{i_1}}\cdots x_{k_{i_{t}}}$, where $t\in [2d,2d+2]$ and $i_1\leq \cdots\leq i_t$. We set $\mathcal{T}=\{i_1,\ldots,i_t\}$. Then
$$g(\textbf{u})=\sum_{i,j\in \mathcal{T},i<j}(k_j-k_i)+\sum_{i,j\notin \mathcal{T},i<j}(k_j-k_i)+\sum_{j\notin \mathcal{T}}(\sum_{i<j,i\in \mathcal{T}}(k_j-k_i)+\sum_{i>j,i\in \mathcal{T}}(k_i-k_j)).$$
Let $\textbf{k}':k'_1,k'_2,\ldots,k'_{dq+r}$ be the corresponding sequence as in (\ref{k_i}) for $\textbf{u}'$. Since $d_i,d_j\in\{d,d+1\}$ and $d_i\geq d_j$, we have $\deg(u'_i)=d_i$ and $\deg(u'_j)=d_j$. Thus the set of indices defining the corresponding subsequence for $(u'_i,u'_j)$ in $\textbf{k}'$ is $\mathcal{T}$ as well. Hence for any $k_{\ell}\notin \mathcal{T}$, $k_{\ell}=k'_{\ell}$ which implies that $\sum_{i,j\notin \mathcal{T},i<j}(k_j-k_i)=\sum_{i,j\notin \mathcal{T},i<j}(k'_j-k'_i)$.
Also since $k'_{i_1}\leq \cdots\leq k'_{i_t}$, we have $$\sum_{i,j\in \mathcal{T},i<j}(k_j-k_i)<\sum_{i,j\in \mathcal{T},i<j}|k_j-k_i|=\sum_{i,j\in \mathcal{T},i<j}(k'_j-k'_i).$$
The inequality is strict, because $(u_i,u_j)$ is unsorted.
To complete the proof it is enough to show that for any $j\notin \mathcal{T}$,
$$\sum_{i<j,i\in \mathcal{T}}(k_j-k_i)+\sum_{i>j,i\in \mathcal{T}}(k_i-k_j)\leq\sum_{i<j,i\in \mathcal{T}}(k'_j-k'_i)+\sum_{i>j,i\in \mathcal{T}}(k'_i-k'_j),$$
Since $k_j=k'_j$, this is equivalent to
\begin{equation}\label{thirdsum}
\sum_{i>j,i\in \mathcal{T}}k_i-\sum_{i<j,i\in \mathcal{T}}k_i\leq\sum_{i>j,i\in \mathcal{T}}k'_i-\sum_{i<j,i\in \mathcal{T}}k'_i.
\end{equation}
Since $k'_{i_1}\leq \cdots\leq k'_{i_t}$ and $\{k_{i_1},\ldots,k_{i_t}\}=\{k'_{i_1},\ldots,k'_{i_t}\}$, we have $\sum\limits_{i<j,i\in \mathcal{T}}k'_i\leq \sum\limits_{i<j,i\in \mathcal{T}}k_i$ and $\sum_{i\in \mathcal{T}}k_i=\sum_{i\in \mathcal{T}}k'_i$. Therefore, $\sum\limits_{i>j,i\in \mathcal{T}}k'_i\geq \sum\limits_{i>j,i\in \mathcal{T}}k_i$. From these inequalities we can conclude (\ref{thirdsum}).
\end{proof}

From Lemma~\ref{sortfinitely} and \cite[Theorem 3.12]{Sturmfels} we conclude

\begin{Theorem}
Let $B$ be a sortable subset of monomials of $S$ and
$$F=\{y_uy_v-y_{u'}y_{v'}:\  u, v\in B,\  (u, v)\textrm{ unsorted},\ (u',v')=\sort(u,v)\}.$$
Then there exists a monomial order $<$ on $T=K[y_u:\ u\in B]$ which is called the sorting
order such that for every $f_{u,v} =y_uy_v-y_{u'}y_{v'}\in F$, $\ini_<(f_{u,v}) =y_uy_v$.
\end{Theorem}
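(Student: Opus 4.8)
The plan is to deduce the statement directly from Sturmfels' criterion \cite[Theorem 3.12]{Sturmfels}. In the form needed here, that criterion concerns a set $\mathcal{G}$ of binomials in a polynomial ring $T$ in which one monomial of each binomial has been singled out, and it guarantees a monomial order on $T$ having exactly these monomials as leading terms, \emph{provided} the rewriting rule that replaces, inside any monomial, a singled-out monomial by the complementary monomial of its binomial admits no infinite chain of applications. Thus the whole argument reduces to verifying this termination (noetherianity) condition for $\mathcal{G}=F$, with $y_uy_v$ singled out in $f_{u,v}=y_uy_v-y_{u'}y_{v'}$.

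First I would set up the dictionary between reductions in $T$ and single sortings. A monomial of $T$ has the form $M=y_{u_1}\cdots y_{u_q}$ with $u_1,\dots,u_q\in B$, recorded as a multiset; since $B$ is sortable, $\sort(u_i,u_j)$ again lies in $B\times B$, so a single rewriting step (pick $i<j$ with $(u_i,u_j)$ unsorted and replace the factor $y_{u_i}y_{u_j}$ by $y_{u'_i}y_{u'_j}$, where $(u'_i,u'_j)=\sort(u_i,u_j)$) stays inside $T$. Along any such sequence of steps the number $q$ of factors and the product $u_1\cdots u_q\in S$ are invariant, so only the finitely many monomials of $S$ dividing this fixed product are ever involved. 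Now Lemma~\ref{sortfinitely} says that, starting from $(u_1,\dots,u_q)$, only finitely many single sortings can be performed and they must terminate at the unique sorted $q$-tuple of Lemma~\ref{qsorted}; at that tuple no pair is unsorted, so no rewriting step of $F$ applies to the corresponding monomial of $T$. This is exactly the absence of infinite reduction chains required by \cite[Theorem 3.12]{Sturmfels}, and feeding it in produces the sorting order $<$ with $\ini_<(f_{u,v})=y_uy_v$ for all $f_{u,v}\in F$; if one wishes, $<$ may be chosen to refine the prescribed ordering of the variables ($y_u>y_v$ whenever $u>_{\lex}v$).

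The step requiring the most care is the translation between Lemma~\ref{sortfinitely}, which is phrased for one fixed $q$-tuple and its particular target $\sort(u_1,\dots,u_q)$, and the hypothesis of \cite[Theorem 3.12]{Sturmfels}, which demands the absence of \emph{all} infinite reduction chains over \emph{all} monomials of $T$. One must check that Lemma~\ref{sortfinitely} actually bounds the length of every sequence of single sortings, not merely that the sorted tuple is reachable — this is what its proof delivers, via the integer statistics $f$ and $g$, which change strictly monotonically at each step and are bounded because of the finiteness noted above — and that the moves coming from $F$ are precisely the single sortings of an unsorted pair occurring anywhere in the tuple. Both points are immediate once the dictionary is in place, so I expect no genuine obstacle beyond being careful here.
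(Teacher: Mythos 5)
Your proposal is correct and follows essentially the same route as the paper: interpret $F$ as a set of marked binomials, use Lemma~\ref{sortfinitely} to guarantee that every chain of reductions (single sortings) terminates, and then invoke \cite[Theorem 3.12]{Sturmfels} to conclude that $F$ is marked coherently, yielding the sorting order. Your extra remark that one needs termination of \emph{all} reduction sequences (which the proof of Lemma~\ref{sortfinitely} indeed supplies via the monotone integer statistics), not merely reachability of the sorted tuple, is a careful reading of exactly the point the paper's proof relies on.
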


\begin{proof}
Consider $F$ as a family of marked binomials $f_{u,v}=\underline{y_uy_v}-y_{u'}y_{v'}$. Lemma~\ref{sortfinitely} implies that every sequence of reductions modulo $F$ terminates in a finite number of steps.
Then by~\cite[Theorem 3.12]{Sturmfels}, $F$ is marked coherently, which means that there exists a monomial order $<$ on $T$ such
that $\ini_<(f_{u,v})=y_uy_v$ for every unsorted pair $(u,v)$.
\end{proof}

\begin{Theorem}\label{Gbasis}
Let $B=\{u_1,\ldots,u_m\}$ be a sortable set of monomials and $A=K[u_1t,\ldots,u_mt]$ be the $K$-algebra attached to $B$ with the toric ideal $P_B\subset T=K[y_u:\ u\in B]$. Then the set
$$\mathcal{G}=\{y_uy_v-y_{u'}y_{v'}:\ (u, v)\textrm{ unsorted},\ (u',v')=\sort(u,v)\}$$
is the reduced Gr\"{o}bner basis of $P_B$ with respect to the sorting order.
In particular, $A$ is Koszul and a normal Cohen-Macaulay domain.
\end{Theorem}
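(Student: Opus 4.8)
The plan is to run the classical ``sorting is a straightening law'' argument, with Lemmas~\ref{qsorted} and~\ref{sortfinitely} supplying the mixed-degree input that in the equidegree case is folklore. First I would record the trivial inclusion $\mathcal{G}\subseteq P_B$: since $\sort(u,v)=(u',v')$ forces $uv=u'v'$ and $\deg u+\deg v=\deg u'+\deg v'$, one gets $\varphi(y_uy_v-y_{u'}y_{v'})=(uv-u'v')\,t^{2}=0$. By the preceding theorem the leading monomial of each nonzero $f_{u,v}\in\mathcal{G}$ with respect to the sorting order $<$ is $y_uy_v$; let $L\subseteq T$ be the ideal generated by these leading monomials, so that $L\subseteq\ini_<(\langle\mathcal{G}\rangle)\subseteq\ini_<(P_B)$. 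The whole task is the reverse inclusion, i.e. $L=\ini_<(P_B)$.

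The heart of the argument is the description of the standard monomials modulo $L$. The quadrics $y_uy_v$ generating $L$ are exactly those for which a single sorting step applied to $(u,v)$ changes the multiset $\{u,v\}$; hence a monomial $w=y_{v_1}\cdots y_{v_q}$ lies outside $L$ precisely when the $q$-tuple $(v_1,\dots,v_q)$ admits no single sorting step that changes it, i.e. $w$ is a fixed point of the sorting rewriting on $\Mon(T)$. By Lemmas~\ref{sortfinitely} and~\ref{qsorted} this rewriting terminates, and the monomial it produces from $w$ corresponds to the unique sorted $q$-tuple with the same product as $(v_1,\dots,v_q)$; therefore a fixed point is exactly a sorted monomial, and the standard monomials modulo $L$ of degree $q$ are the $y_{v_1}\cdots y_{v_q}$ for which $(v_1,\dots,v_q)$, suitably ordered, is a sorted $q$-tuple of elements of $B$. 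Lemma~\ref{qsorted} also shows that such a sorted tuple is determined by its product together with its length, so $\varphi$ is injective on the set of sorted monomials; hence their images $(v_1\cdots v_q)\,t^{q}\in S[t]$ are pairwise distinct monomials, in particular $K$-linearly independent in $A$.

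Next I would compare Hilbert functions. For every monomial $w\in T$ the monomial obtained from $w$ by single sorting steps differs from $w$ by an element of $\langle\mathcal{G}\rangle\subseteq P_B$ (Lemma~\ref{sortfinitely}), so the degree-$q$ component $A_q=\varphi(T_q)$ is spanned by the images of the sorted monomials of degree $q$; these being linearly independent, $\dim_K A_q$ equals the number of sorted monomials of degree $q$, which is $\dim_K(T/L)_q$. On the other hand $\dim_K A_q=\dim_K(T/P_B)_q=\dim_K(T/\ini_<(P_B))_q\le\dim_K(T/L)_q$ since $L\subseteq\ini_<(P_B)$. Hence $L=\ini_<(P_B)$, so $\mathcal{G}$ is a Gr\"obner basis of $P_B$ (and $\langle\mathcal{G}\rangle=P_B$). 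It is reduced: the leading monomials $y_uy_v$ are squarefree quadrics --- note that $(u,u)$ is always a sorted pair, so $u\neq v$ --- forming the minimal monomial generating set of $L$, and each tail $y_{u'}y_{v'}$ is a sorted monomial, hence standard, hence already in normal form.

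Finally, for the ``in particular'' clause: $A=K[u_1t,\dots,u_mt]$ is a subring of the domain $S[t]$, hence a domain; it is generated in degree one and $P_B$ admits a quadratic Gr\"obner basis, so $A$ is Koszul; the initial ideal $\ini_<(P_B)=L$ is squarefree, so $A$ is normal by \cite{Sturmfels}; and a normal affine semigroup ring is Cohen--Macaulay by Hochster's theorem, which applies since $A$ is the semigroup ring of the monoid generated by the exponent vectors of $u_1t,\dots,u_mt$. The step needing the most care is the identification of the standard monomials modulo $L$ with sorted tuples and the injectivity of $\varphi$ on them, but both follow immediately from Lemmas~\ref{qsorted} and~\ref{sortfinitely}, so in the end the proof is mostly Hilbert-function bookkeeping together with the cited facts on Koszulness, normality, and Cohen--Macaulayness.
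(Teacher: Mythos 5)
Your proof is correct and follows essentially the same route as the paper, which simply cites the argument of \cite[Theorem 6.16]{Ene-Herzog}: identify the standard monomials for the sorting order with sorted tuples (here via Lemmas~\ref{qsorted} and~\ref{sortfinitely}), use uniqueness of the sorted tuple to see that $\varphi$ separates them, and conclude by a Macaulay-type comparison, with the Koszul, normality and Cohen--Macaulay claims obtained from the quadratic squarefree initial ideal exactly as in the standard references. Your Hilbert-function bookkeeping is just a repackaging of the linear-independence criterion used there, so there is no substantive difference in approach.
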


\begin{proof}
The proof is identical to \cite[Theorem 6.16]{Ene-Herzog}.
\end{proof}

\section{Toric rings attached to minimal vertex covers of proper interval graphs}

In this section we show that the vertex cover ideal of any proper interval graph is sortable and as an application of Theorem~\ref{Gbasis} we conclude that the toric rings attached to vertex cover ideals of such graphs are Koszul and normal Cohen-Macaulay domains. First we recall some definitions.

Let $G$ be a finite simple graph on the vertex set $V(G)=[n]$ and with the edge set $E(G)$. Recall that a subset $C\subseteq [n]$ is called a \emph{vertex cover} of $G$,
if it intersects any edge of $G$. Moreover, $C$ is called a \emph{minimal vertex cover} of $G$, if it is a vertex cover and no proper subset of $C$ is a vertex cover of $G$.
Let $C_1,\ldots,C_m$  be the minimal vertex covers of $G$. The {\em vertex cover ideal} or shortly cover ideal of $G$ is a squarefree monomial ideal in the polynomial ring $S=K[x_1,\ldots,x_n]$ defined as
$$I_G=(x_{C_1},\ldots,x_{C_m}),$$ where $x_{C_j}=\prod_{i\in C_j} x_i$. For a vertex $i\in V(G)$, set $N_G(i)=\{j\in V(G):\ \{i,j\}\in E(G)\}$  and $N_G[i]=N_G(i)\cup\{i\}$.

A graph $G$ is called a {\em proper interval} graph if there exists a labeling $[n]$ on its vertex set such that for all $i<j$, $\{i, j\}\in E(G)$ implies that the induced subgraph of $G$ on $\{i, i+1, \dots ,j\}$ is a complete subgraph of $G$. Proper interval graphs are well studied in the literature, see for example \cite{BW,G}. A nice characterization of these graphs in terms of the sortability of their independence complexes is given in~\cite{HKMR}.

We say that a monomial ideal $I$ is {\em sortable}, if $\mathcal{G}(I)$ is a sortable set of monomials, where  $\mathcal{G}(I)$ is the unique set of minimal monomial generators of $I$.

\begin{Theorem}\label{Properinterval}
Let $G$ be a proper interval graph on $[n]$ and $I_G$ be the cover ideal of $G$.
Then $I_G$ is a sortable ideal.
\end{Theorem}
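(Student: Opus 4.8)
The plan is to translate the problem into one about maximal independent sets of $G$, for which the proper interval labeling provides a transparent local description, and then to exploit an elementary ``interval–counting'' property of the sorting operation. Since $C\subseteq[n]$ is a minimal vertex cover of $G$ exactly when its complement $\overline C:=[n]\setminus C$ is a maximal independent set, we have $\mathcal G(I_G)=\{x_{\overline T}:\ T\text{ a maximal independent set of }G\}$. First I would note how $\sort$ interacts with complementation. A direct check from the definition shows that, for subsets $P,Q\subseteq[n]$, the pair $(x_P,x_Q)$ is sorted if and only if $0\le|P\cap[k]|-|Q\cap[k]|\le1$ for every $k\in[n]$; combining this with $|\overline X\cap[k]|=k-|X\cap[k]|$ and the equality $A'\uplus B'=A\uplus B$ of multisets, one obtains that $\sort(x_A,x_B)=(x_{A'},x_{B'})$ implies $\sort(x_{\overline A},x_{\overline B})=(x_{\overline{B'}},x_{\overline{A'}})$. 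Hence a set $\mathcal M$ of squarefree monomials is sortable if and only if $\{x_{\overline A}:x_A\in\mathcal M\}$ is, so it suffices to prove that $\mathcal F:=\{x_T:\ T\text{ a maximal independent set of }G\}$ is sortable.

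Next I would set up the combinatorics. Using only that $\{i,j\}\in E(G)$ with $i<j$ forces $\{i,i+1,\dots,j\}$ to span a clique, one gets: (a) $T\subseteq[n]$ is independent if and only if $|T\cap\{i,\dots,j\}|\le1$ for every edge $\{i,j\}$ with $i<j$; and (b) for every $v\in[n]$ the closed neighbourhood $N_G[v]$ is an interval of consecutive integers. Together with the standard equivalences ``$T$ maximal independent $\Leftrightarrow$ $T$ independent and dominating'' and ``$T$ dominating $\Leftrightarrow$ $T\cap N_G[v]\ne\emptyset$ for all $v$'', this shows that $T$ being a maximal independent set is precisely the conjunction of the conditions $|T\cap I|\le1$ (over all clique-intervals $I$) and $|T\cap I|\ge1$ (over the intervals $I=N_G[v]$), each $I$ being an interval of $[n]$.

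The heart of the argument is then the following: if $\sort(x_{A_1},x_{A_2})=(x_{A'},x_{A''})$, then for every interval $I\subseteq[n]$,
\[
\min\{|A_1\cap I|,|A_2\cap I|\}\ \le\ |A'\cap I|,\ |A''\cap I|\ \le\ \max\{|A_1\cap I|,|A_2\cap I|\}.
\]
This drops out of the identities $|A'\cap[k]|=\lceil(|A_1\cap[k]|+|A_2\cap[k]|)/2\rceil$ and $|A''\cap[k]|=\lfloor(|A_1\cap[k]|+|A_2\cap[k]|)/2\rfloor$ (immediate from the definition of $\sort$ on squarefree monomials): writing $I=[c,c']$ and $\delta=|A_1\cap I|+|A_2\cap I|$, both $|A'\cap I|$ and $|A''\cap I|$ are differences of two ceilings (respectively floors) of halves whose arguments differ by $\delta$, hence lie in $\{\lfloor\delta/2\rfloor,\lceil\delta/2\rceil\}$, which is contained in the displayed interval. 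Granting this, take maximal independent sets $T_1,T_2$ and put $\sort(x_{T_1},x_{T_2})=(x_{T'},x_{T''})$. For a clique-interval $I$ we have $|T_1\cap I|,|T_2\cap I|\le1$, hence $|T'\cap I|,|T''\cap I|\le1$, so $T',T''$ are independent by (a); for $I=N_G[v]$ we have $|T_1\cap I|,|T_2\cap I|\ge1$, hence $|T'\cap I|,|T''\cap I|\ge1$, so $T',T''$ are dominating. Therefore $T',T''$ are again maximal independent sets, $\mathcal F$ is sortable, and by Step 1 so is $\mathcal G(I_G)$.

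The expected obstacle is bookkeeping rather than insight: verifying the complementation identity for $\sort$ in the first step, and the integer-part estimate in the last step (which reduces to the inequalities $\lfloor\delta/2\rfloor\le\lceil(B+\delta)/2\rceil-\lceil B/2\rceil\le\lceil\delta/2\rceil$ for integers $B,\delta\ge0$). Conceptually everything is forced once one observes that being a maximal independent set is equivalent to being independent \emph{and} dominating, and that both are monotone cardinality constraints on intersections with intervals of $[n]$ — a shape of condition that sorting preserves; in particular no hypothesis on the connectivity of $G$ is needed.
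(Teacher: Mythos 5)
Your proposal is correct, and it takes a genuinely different route from the paper. The paper argues directly on the minimal vertex covers: it writes $x_{C_1}x_{C_2}=x_{i_1}\cdots x_{i_k}$, assumes an edge $\{i_{2\ell},i_{2t}\}$ misses the odd-indexed support $D_1$, and derives a contradiction by repeatedly invoking the clique property on triples $\{i_{2\ell},i_{2\ell+1},i_{2t}\}$; minimality of $D_1,D_2$ is then handled in a second step using that $N_G[v]$ is an interval together with a divisibility argument on $x_{C_1}x_{C_2}$. You instead pass to complements (maximal independent sets), observe that complementation intertwines with sorting via the prefix-count identities $|A'\cap[k]|=\lceil(|A_1\cap[k]|+|A_2\cap[k]|)/2\rceil$ and $|A''\cap[k]|=\lfloor\cdot\rfloor$, and reformulate ``maximal independent'' as a conjunction of interval-count constraints (at most one vertex in each clique interval $\{i,\dots,j\}$ for $\{i,j\}\in E(G)$, at least one in each closed neighbourhood, which is an interval under the proper interval labeling); the single lemma that sorting keeps each interval count between the min and the max of the two inputs then settles both independence and maximality at once. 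I checked the key identities (the prefix-count description of $\sort$ for squarefree monomials, the sortedness criterion $0\le|P\cap[k]|-|Q\cap[k]|\le1$, the swap $\sort(x_{\overline A},x_{\overline B})=(x_{\overline{B'}},x_{\overline{A'}})$, and the floor/ceiling estimate) and they all hold. Your approach buys a cleaner, more conceptual argument and in fact a more general principle — any family of subsets cut out by upper and lower bounds on intersections with intervals is sortable — which also explains the sortability of the independence complex itself (in the spirit of the cited work of Herzog, Khosh-Ahang, Moradi and Rahimbeigi); the paper's proof buys self-containedness, staying entirely on the vertex-cover side without the complementation bookkeeping.
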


\begin{proof}
Let $C_1$ and $C_2$ be minimal vertex covers of $G$ and set $u_i=x_{C_i}$ for $i=1,2$.
Let $\sort(u_1,u_2)=(v_1,v_2)$. First we show that $v_1,v_2\in I_G$.
Let $u_1u_2=x_{i_1}x_{i_2}\cdots x_{i_k}$ with $i_1\leq i_2\leq \cdots \leq i_k$. Then $v_1=\prod_{1\leq 2\ell+1\leq k} x_{i_{2\ell+1}}$ and $v_2=\prod_{2\leq 2\ell\leq k} x_{i_{2\ell}}$.
We set $D_1=\supp(v_1)$ and $D_2=\supp(v_2)$. By contradiction assume that $D_1$ is not a vertex cover of $G$.
Then there exists and edge $e$ of $G$ such that $e\cap D_1=\emptyset$. Since $e\cap \{i_1,i_2,\ldots,i_k\}\neq \emptyset$, we should have $e=\{i_{2\ell},i_{2t}\}$ for some $\ell$ and $t$ with $\ell<t\leq k/2$. Since $i_{2\ell}\leq i_{2\ell+1}\leq i_{2t}$, and $e\cap D_1=\emptyset$, we have $i_{2\ell}< i_{2\ell+1}< i_{2t}$. Since $G$  is proper interval, the induced subgraph of $G$ on the set $A=\{i_{2\ell},i_{2\ell+1},i_{2t}\}$ is the complete graph $K_3$. Hence $|A\cap C_1|\geq 2$ and $|A\cap C_2|\geq 2$. So there exists an element in $A$ which belongs to $C_1\cap C_2$. If $i_{2\ell}\in C_1\cap C_2$, then $i_{2\ell}=i_{2\ell-1}$ or $i_{2\ell}=i_{2\ell+1}$ which means $i_{2\ell}\in e\cap D_1$, a contradiction to our assumption. So $i_{2\ell}\notin C_1\cap C_2$. By a similar reason, $i_{2t}\notin C_1\cap C_2$. Thus $i_{2\ell+1}\in C_1\cap C_2$. Hence $i_{2\ell+1}=i_{2\ell+2}<i_{2t}$. So $2\ell+2<2t$ and then $i_{2\ell}< i_{2\ell+3}\leq i_{2t}$. If $i_{2\ell+3}=i_{2t}$, then $i_{2\ell+3}\in e\cap D_1$, a contradiction. Hence $i_{2\ell}<i_{2\ell+3}< i_{2t}$ and similar to the argument above, we obtain $i_{2\ell+3}=i_{2\ell+4}$. Then $2\ell+4<2t$. Proceeding this way, we get $2t=2\ell+(2t-2\ell)<2t$, a contradiction. Hence, $D_1$ is a vertex cover of $G$.
By a similar argument $D_2$ is a vertex cover of $G$.

Now, we show that $D_1$ is a minimal vertex cover of $G$. By contradiction assume that $D'_1=D_1\setminus \{i_{2r+1}\}$ is a vertex cover of $G$ for some $r\geq 0$. Then $N_G[i_{2r+1}]\subseteq D_1$. We show that either $N_G[i_{2r+1}]\subseteq C_1$ or $N_G[i_{2r+1}]\subseteq C_2$.  Since $G$ is a proper interval graph and $N_G[i_{2r+1}]\subseteq D_1$, we may write $N_G[i_{2r+1}]=[i_{2s+1},i_{2t+1}]$.
We set $A=\{i_{2\ell}:\ s+1\leq \ell\leq t\}$ and $B=\{i_{2\ell+1}:\ s\leq \ell\leq t\}$. Then $N_G[i_{2r+1}]=A\cup B\subseteq D_1$. This implies that $A\subseteq B$. Hence, $N_G[i_{2r+1}]=B$. Moreover, since $|B|=|A|+1$, we have $B=A\cup \{i_{2p+1}\}$ for some $s\leq p\leq t$.
Hence $x_A^2 x_{i_{2p+1}}=x_Ax_B=\prod_{j=2s+1}^{2t+1} x_{i_j}$. Thus $x_A^2 x_{i_{2p+1}}$ divides $x_{C_1}x_{C_2}$. Therefore, $x_A x_{i_{2p+1}}$ divides either
$x_{C_1}$ or $x_{C_2}$. This means that $B\subseteq C_1$ or $B\subseteq C_2$. Since $N_G[i_{2r+1}]=B$, we get $N_G[i_{2r+1}]\subseteq C_i$ for some $i\in\{1,2\}$. Thus
$C_i\setminus \{i_{2r+1}\}$ is a vertex cover of $G$, which contradicts to the minimality of $C_i$.
Hence $D_1$ is a minimal vertex cover. By a similar argument $D_2$ is a minimal vertex cover of $G$. Hence, $v_1,v_2\in \mathcal{G}(I_G)$.
\end{proof}

\medskip

For a graph $G$, the vertex cover algebra of $G$ is defined as the toric ring $$K[x_Ct:\  C\  \textrm{is a minimal vertex cover of $G$}].$$

By Theorems  \ref{Gbasis} and \ref{Properinterval} we obtain

\begin{Corollary}
Let $G$ be a proper interval graph. Then the vertex cover algebra of $G$ is Koszul and a normal Cohen-Macaulay domain.
\end{Corollary}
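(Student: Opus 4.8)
The plan is to verify that the vertex cover algebra of a proper interval graph fits precisely into the framework developed in Section~1, and then quote Theorem~\ref{Gbasis}.

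First I would unwind the definitions. By definition, the vertex cover algebra of $G$ is the toric ring
$$K[x_Ct:\ C\text{ a minimal vertex cover of }G],$$
which in the notation of Theorem~\ref{Gbasis} is exactly the $K$-algebra $A=K[u_1t,\ldots,u_mt]$ attached to the set $B=\{u_1,\ldots,u_m\}=\mathcal{G}(I_G)$ of minimal monomial generators of the cover ideal $I_G=(x_{C_1},\ldots,x_{C_m})$. Thus the vertex cover algebra is precisely $A$ with respect to $B=\mathcal{G}(I_G)$.

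Second, I would observe that the hypothesis of Theorem~\ref{Gbasis} is that $B$ be a sortable set of monomials. Since $G$ is a proper interval graph, Theorem~\ref{Properinterval} tells us exactly that $I_G$ is a sortable ideal, i.e.\ that $\mathcal{G}(I_G)$ is a sortable set of monomials. Hence $B=\mathcal{G}(I_G)$ satisfies the hypothesis of Theorem~\ref{Gbasis}.

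Finally, I would simply apply Theorem~\ref{Gbasis}: its conclusion states that $A=K[u_1t,\ldots,u_mt]$ is Koszul and a normal Cohen-Macaulay domain. Since $A$ is the vertex cover algebra of $G$, this is exactly the assertion of the corollary. I do not expect any obstacle here: the corollary is a direct specialization of Theorem~\ref{Gbasis} to the sortable ideals produced by Theorem~\ref{Properinterval}, and the only thing to check is the (purely notational) identification of the vertex cover algebra with the ring $A$ built from $B=\mathcal{G}(I_G)$.
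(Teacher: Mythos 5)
Your proposal is correct and follows exactly the paper's route: the corollary is stated as an immediate consequence of Theorems~\ref{Gbasis} and~\ref{Properinterval}, with precisely the identification of the vertex cover algebra with $A=K[u_1t,\ldots,u_mt]$ for $B=\mathcal{G}(I_G)$ that you spell out. No gaps.
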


\section{Cover ideals with componentwise linear powers}

Let $G$ be a graph on $V(G)=\{x_1,\ldots,x_n\}$. A {\em clique partition} $\pi$ of $G$ is a partition of $V(G)$ into disjoint subsets $V_1,\ldots,V_m$ such that for each $1\leq i\leq m$ the induced subgraph of $G$ on $V_i$ is a complete graph. For a given clique partition $\pi=(V_1,\ldots,V_m)$ of $G$ and positive integers $r_1,\ldots,r_m$, the {\em clique multi-whiskered graph} $G^{\pi}(r_1,\ldots,r_m)$ is defined to have the vertex set $V(G)\cup(\bigcup_{i=1}^m\{z_1^{(i)},\ldots,z_{r_i}^{(i)}\})$
and the edge set $$E(G)\cup \bigcup_{i=1}^m (\bigcup_{x_j\in V_i} \{\{x_j,z_1^{(i)}\},\{x_j,z_2^{(i)}\},\ldots,\{x_j,z_{r_i}^{(i)}\}\}).$$

\begin{Example}
For the graph $G$ depicted in Figure \ref{figure1}, with the clique partition $\pi=(\{x_1,x_2,x_3\},\{x_4\},\{x_5,x_6\})$,  the graph $G^{\pi}(2,3,2)$ is shown below.

\begin{figure}[h]
\begin{center}
\includegraphics[height=4cm]{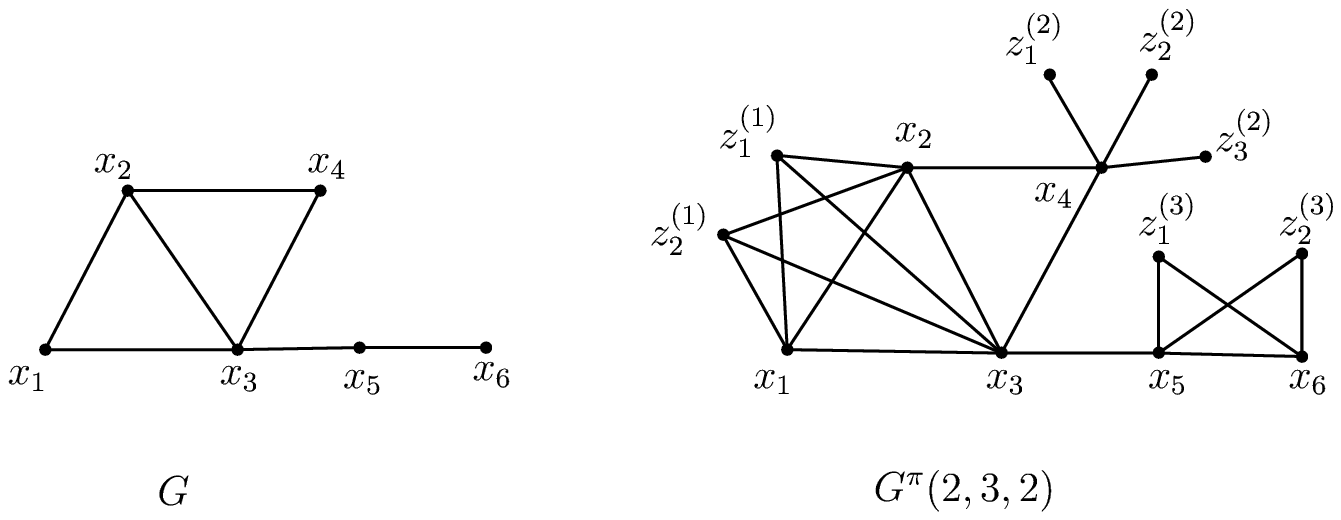}
\end{center}
\caption{}
\label{figure1}
\end{figure}
\end{Example}

It is easy to see that each minimal vertex cover of $G^{\pi}(r_1,\ldots,r_m)$ is of the form
$$C'=C\cup (\bigcup_{V_i\nsubseteq C} \{z_1^{(i)},\ldots,z_{r_i}^{(i)}\}),$$
where $C$ is a vertex cover (not necessarily minimal) of $G$ and vice versa.
Let $C_1,\ldots,C_q$ be all the vertex covers of $G$. Then $\{C'_1,\ldots,C'_q\}$ is the set of minimal vertex covers of $G^{\pi}(r_1,\ldots,r_m)$.
Let $$S=K[x_1,\ldots,x_n,z_1^{(1)},\ldots,z_{r_1}^{(1)},\ldots,z_1^{(m)},\ldots,z_{r_m}^{(m)}]$$ be the polynomial ring in $n+\sum_{i=1}^m r_i$ variables over a field $K$. Let $<_{\lex}$ denote the pure lexicographic order on $S$ induced by the ordering
$$z_1^{(1)}>\cdots>z_{r_1}^{(1)}> z_1^{(2)}> z_2^{(2)}>\cdots>z_{r_m}^{(m)}>x_1>\cdots>x_n$$
of the variables.
We associate each $C'_{\ell}$, $1\leq \ell\leq q$ with a squarefree monomial $u_{C'_{\ell}}=(\prod\limits_{x_j\in C'_{\ell}} x_j)(\prod\limits_{z_k^{(i)}\in C'_{\ell}} z_k^{(i)})\in S$.
One can assume that $u_{C'_1}>_{\lex} \cdots >_{\lex} u_{C'_q}$. The vertex cover ideal of $G^{\pi}(r_1,\ldots,r_m)$ is the squarefree monomial ideal
$I_{G^{\pi}(r_1,\ldots,r_m)}=(u_{C'_1},\ldots,u_{C'_q})$ and its Rees algebra is the toric ring $$\mathcal{R}(I_{G^{\pi}(r_1,\ldots,r_m)})=K[u_{C'_1}t,\ldots,u_{C'_q}t,x_1,\ldots,x_n,z_1^{(1)},\ldots,z_{r_m}^{(m)}]\subset S[t].$$

Let $T=S[y_1,\ldots,y_q]$ denote the polynomial ring and define the surjective map $\pi: T\rightarrow \mathcal{R}(I_{G^{\pi}(r_1,\ldots,r_m)})$ by setting
$\pi(x_j)=x_j$ for all $j$, $\pi(z_k^{(i)})=z_k^{(i)}$ for all $k$ and $i$, and $\pi(y_{\ell})=u_{C'_{\ell}}t$ for all $\ell$.
The toric ideal $J_{G^{\pi}(r_1,\ldots,r_m)} \subset T$ of $\mathcal{R}(I_{G^{\pi}(r_1,\ldots,r_m)})$ is the kernel of $\pi$.

Let $<'$ denote the pure lexicographic order on $K[y_1,\ldots,y_q]$ induced by the ordering
$y_1>\cdots>y_q$.

Now, we introduce the monomial order $<$ on $T$ for which
\begin{eqnarray}
\label{monomialorder}
(\prod x_i^{a_i})(\prod {z_k^{(i)}}^{b_{k,i}})(\prod y_i^{b_i}) &<& (\prod x_i^{a'_i})(\prod {z_k^{(i)}}^{b'_{k,i}})(\prod  y_i^{b'_i}),
\end{eqnarray}
if
\[\prod  y_i^{b_i} <'\prod  y_i^{b'_i}\ \  \text{or}\] 

\[\prod  y_i^{b_i}=\prod  y_i^{b'_i}\ \   \text{and} \ \  (\prod x_i^{a_i})(\prod {z_k^{(i)}}^{b_{k,i}}) <_{\lex} (\prod x_i^{a'_i})(\prod {z_k^{(i)}}^{b'_{k,i}}).
\]

\begin{Lemma}\label{x condition}
The ideal $J_{G^{\pi}(r_1,\ldots,r_m)}$ satisfies the $x$-condition with respect to $<$.
\end{Lemma}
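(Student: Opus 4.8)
The plan is to show that the reduced Gröbner basis of $J_{G^{\pi}(r_1,\ldots,r_m)}$ with respect to $<$ consists of binomials whose leading terms, when they involve a $y$-variable at all, involve it only linearly — this is precisely the $x$-condition. I would begin by analyzing the defining relations of the Rees algebra. The toric ideal of a Rees algebra always contains two types of binomials: the relations coming from the symmetric algebra (the "linear" or Taylor relations $x_{\mathbf w}\,y_k - x_{\mathbf w'}\,y_\ell$ expressing $u_{C'_k}/\gcd \cdot y_\ell = u_{C'_\ell}/\gcd \cdot y_k$ type syzygies on pairs of generators), and relations purely in the $y$-variables coming from the toric ideal of $K[u_{C'_1},\ldots,u_{C'_q}]$ itself. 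Since the monomial order $<$ refines the $y$-degree by $<'$ and then breaks ties by $<_{\lex}$ on the $x,z$-part, the leading term of any binomial of the first type is $y_k$ (the variable of smaller index, i.e.\ larger in $<'$) times a monomial in $x,z$ only — automatically linear in $y$. So the whole issue is whether, in the Buchberger algorithm, new generators with leading terms of higher $y$-degree that are nevertheless nonlinear in a single $y$-variable can appear.

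The key structural step I would carry out is to exploit the special combinatorial form of the generators $u_{C'_\ell}$. Recall each minimal vertex cover of $G^{\pi}(r_1,\ldots,r_m)$ is $C' = C \cup \bigcup_{V_i\not\subseteq C}\{z_1^{(i)},\ldots,z_{r_i}^{(i)}\}$ for $C$ a vertex cover of $G$; equivalently, for each clique block $V_i$, the cover $C'$ either contains all of $V_i$ (and none of the whiskers $z^{(i)}$) or it contains all the whiskers $z^{(i)}_1,\ldots,z^{(i)}_{r_i}$ (and some subset of $V_i$ missing at least one vertex). This "block-by-block" rigidity should let me describe $\gcd(u_{C'_k},u_{C'_\ell})$ and the colon ideals $(u_{C'_k}) : (u_{C'_\ell})$ very explicitly, and in particular show that the $S$-polynomial of any two Taylor-type relations reduces, modulo the already-known relations, to something whose leading term is again linear in $y$. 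Concretely, I expect to argue that after choosing the lex order on the $z$'s so that within a block $z^{(i)}_1 > \cdots > z^{(i)}_{r_i}$, one can track a single "witness whisker" per block, and that the relevant exponent comparisons in $<$ are governed entirely by which blocks are "toggled", reducing the analysis to a graph-combinatorial statement about vertex covers of $G$.

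The main obstacle, as usual for $x$-condition arguments, will be controlling the $S$-polynomials among relations that are themselves nonlinear in the $y$'s (the pure toric relations among the $u_{C'_\ell}$) and their overlaps with the Taylor relations: I must rule out that a cancellation in the $x,z$-part forces a leading term of $y$-degree $\ge 2$ in a single variable. I would handle this by a careful case distinction on whether two covers $C'_k, C'_\ell$ differ inside a single block or across several blocks, using that the whisker variables are lex-largest to force the leading term to always "expose" a $y$-variable linearly rather than squaring one; I expect the verification that every such $S$-polynomial reduces to zero (or to a linear-in-$y$ leading term) through the given generating set to be the technical heart of the argument, and the place where the precise choice of monomial order $<$ in \eqref{monomialorder} is essential.
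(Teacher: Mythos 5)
Your plan misidentifies what the $x$-condition asks for, and this derails the whole strategy. The $x$-condition constrains the degree of the minimal generators of $\ini_<(J_{G^{\pi}(r_1,\ldots,r_m)})$ in the \emph{ring} variables $x_j,z_k^{(i)}$: each generator must be of the form $u\cdot w$ with $u\in S$ of degree at most one and $w$ a monomial in the $y$'s, while the $y$-degree is completely unconstrained. Your proposal instead aims at leading terms that are ``linear in $y$''. Consequently your observation that the Taylor-type relations $x_{\mathbf w}y_k-x_{\mathbf w'}y_\ell$ have leading term $x_{\mathbf w}y_k$ buys nothing: the monomial $x_{\mathbf w}$ typically has degree at least two, so these binomials do not witness the $x$-condition, whereas the pure toric relations among the $u_{C'_\ell}$ (leading terms of $y$-degree $\geq 2$, no $x$- or $z$-variables) are perfectly admissible. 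What is needed, and is missing from your sketch, is an explicit supply of relations whose leading terms have $S$-degree exactly one and which, together with the pure $y$-relations, generate the entire initial ideal. The paper exhibits them: for a cover $C_a$ of $G$ and a block $V_i\not\subseteq C_a$, choosing $x_j\in V_i\setminus C_a$ and $C_b=C_a\cup\{x_j\}$ gives $x_ju_{C'_a}=z_1^{(i)}\cdots z_{r_i}^{(i)}u_{C'_b}$ with $u_{C'_b}<_{\lex}u_{C'_a}$, hence $f_{j,a}=x_jy_a-z_1^{(i)}\cdots z_{r_i}^{(i)}y_b\in J_{G^{\pi}(r_1,\ldots,r_m)}$ and $\ini_<(f_{j,a})=x_jy_a$. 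Your ``block-by-block rigidity'' remarks gesture at this structure but never isolate these binomials.

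Second, even with the corrected goal, your argument is not carried out: the verification that the candidate set is a Gr\"obner basis is deferred entirely to a Buchberger/S-polynomial analysis that you yourself label ``the technical heart'' and do not perform. The paper avoids S-polynomials altogether. It takes $\mathcal{G}\cup\mathcal{G}'$, where $\mathcal{G}$ is the reduced Gr\"obner basis of the toric ideal of $K[u_{C'_1}t,\ldots,u_{C'_q}t]$ with respect to $<'$ and $\mathcal{G}'$ consists of the binomials $f_{j,a}$, and applies \cite[Lemma 1.1]{AHH}: it suffices to show that monomials $w=w_xw_zw_y$ not divisible by any of the proposed leading terms map injectively under $\pi$. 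There, divisibility by some $x_j$ is excluded because $\deg(w_y)=\deg(w'_y)$ forces some $y_\ell\,|\,w_y$ with $x_j\notin C_\ell$, so $x_jy_\ell=\ini_<(f_{j,\ell})$ would divide $w$; divisibility by some $z_k^{(i)}$ is excluded by comparing, in $\pi(w)$ and $\pi(w')$, the number of factors $u_{C'_t}$ containing $z_k^{(i)}$ against the total multiplicity of the vertices of $V_i$, which contradicts $\pi(w)=\pi(w')$; what remains lies in $K[y_1,\ldots,y_q]$ and is handled by $\mathcal{G}$. Without either this injectivity argument or a completed reduction of all S-polynomials, your proposal contains no proof of the crucial Gr\"obner-basis claim, so the gap is essential rather than merely technical.
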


\begin{proof}
Let $J'\subset K[y_1,\ldots,y_m]$ denote the toric ideal of $K[u_{C'_1}t,\ldots,u_{C'_q}t,]$ and $\mathcal{G}$ the reduced Gr\"{o}bner basis of $J'$ with respect to $<'$. Let $C_a$ be a vertex cover of $G$ and $1\leq i\leq m$ be an integer with $V_i\nsubseteq C_a$. Choose $x_j\in V_i\setminus C_a$ and let $C_b=C_a\cup\{x_j\}$. Then $x_ju_{C'_a}=z_1^{(i)}\cdots z_{r_i}^{(i)}u_{C'_b}$ and $u_{C'_b}<_{\lex}u_{C'_a}$. Hence $f_{j,a}=x_jy_a-z_1^{(i)}\cdots z_{r_i}^{(i)} y_b\in J_{G^{\pi}(r_1,\ldots,r_m)}$ with $\ini_<(f_{j,a})=x_jy_a$. Let $\mathcal{G'}$ denote the set of such binomials $f_{j,a}$.

We claim that $\mathcal{G}\cup \mathcal{G'}$ is a Gr\"{o}bner basis of $J_{G^{\pi}(r_1,\ldots,r_m)}$ with respect to $<$.
Let $\Omega\subset T$ denote the set of those monomials $w\in T$ for which none of the initial monomials $\ini_<(g)$ with $g\in \mathcal{G'}$ and $\ini_{<'}(h)$ with $h\in \mathcal{G}$ divides $w$. Using~\cite[Lemma 1.1]{AHH}, we need to show that the monomials in $\Omega$ are linearly independent in $T/J_{G^{\pi}(r_1,\ldots,r_m)}$. In other words,
if $w,w'\in \Omega$ with $w\neq w'$, then $\pi(w)\neq \pi(w')$.

Write $w=w_xw_zw_y$, where $w_x$ is a monomial in $K[x_1,\ldots,x_n]$, $w_z$ is a monomial in $K[z_k^{(i)},\ 1\leq i\leq m ,\ 1\leq k\leq r_i]$ and $w_y$ is a monomial in $K[y_1,\ldots,y_q]$. Suppose $w$ and $w'$ possess no common variable with $\pi(w)=\pi(w')$. Then one has $\deg(w_y)=\deg(w'_y)$.

Let $w_x\neq 1$ and $x_j|w_x$. Then $x_j\nmid w'_x$. Since $\deg(w_y)=\deg(w'_y)$, it follows that there exists $1\leq \ell\leq q$ for which $y_{\ell}|w_y$ and $x_j\notin C_{\ell}$. Since $x_jy_{\ell}|w$ and $x_jy_{\ell}=\ini_<(f_{j,\ell})$, it follows that $w\notin \Omega$, a contradiction. Hence $w_x=w'_x=1$.

Let $w_z\neq 1$ and $z_k^{(i)}|w_z$. Let $d$ denote the number of $y_t$ for which $y_t|w_y$ and $z_k^{(i)}\in C'_t$ and $d'$ denote the number of $y_{t'}$ for which $y_{t'}|w'_y$ and $z_k^{(i)}\in C'_{t'}$. Then $d<d'$. Let $V_i=\{x_{i_1},\ldots,x_{i_s}\}$, and let $a_{\ell}\geq 0$ be the biggest integer for which $x_{i_{\ell}}^{a_{\ell}}|\pi(w_y)$ and
$a'_{\ell}\geq 0$ that for which $x_{i_{\ell}}^{a'_{\ell}}|\pi(w'_y)$. Then $\sum_{\ell=1}^s a_{\ell}>\sum_{\ell=1}^s a'_{\ell}$. This contradicts to $\pi(w)=\pi(w')$. Hence $w_z=w'_z=1$.

Therefore $w=w_y$ and $w'=w'_y$. Since $\mathcal{G}$ is the reduced Gr\"{o}bner basis of $J'$ with respect to $<'$ and since $w,w'\in \Omega$, we should have $w=w'$.
\end{proof}

\begin{Theorem}\label{powers}
Let $G$ be a graph, $\pi$ be an arbitrary clique partition of $G$ (to $m$ parts) and $r_1,\ldots,r_m$ be positive integers. Then all powers of the vertex cover ideal $I_{G^{\pi}(r_1,\ldots,r_m)}$ have linear quotients and hence are componentwise linear.
\end{Theorem}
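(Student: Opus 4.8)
The plan is to combine the $x$-condition established in Lemma~\ref{x condition} with the general machinery from \cite{HHM} that translates the $x$-condition on the defining ideal of a Rees algebra into componentwise linearity of powers. Concretely, the Rees algebra $\mathcal{R}(I_{G^{\pi}(r_1,\ldots,r_m)})$ is generated in the $t$-grading by elements $u_{C'_\ell}t$ which are not all of the same degree, so we are in the setting of graded ideals not generated in a single degree; this is exactly the framework for which \cite{HHM} extended the $x$-condition. First I would record that the monomial order $<$ on $T=S[y_1,\ldots,y_q]$ is of the right shape: it refines the standard grading by the $y$-degree (the power of $t$) and within a fixed $y$-degree it is a pure lexicographic order on the $x$- and $z$-variables with all $x_i,z_k^{(i)}$ larger than the $y_j$'s. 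Then Lemma~\ref{x condition} says precisely that the reduced Gröbner basis of $J_{G^{\pi}(r_1,\ldots,r_m)}$ with respect to $<$ consists of elements whose initial monomials, regarded in the $y$-variables, are at most linear, i.e. of the form (monomial in $x,z$) times at most one $y_j$ — which is the $x$-condition.

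Next I would invoke the key theorem of \cite{HHM}: if $I\subset S$ is a graded ideal whose Rees algebra satisfies the $x$-condition with respect to a suitable monomial order (together with the mild combinatorial hypothesis on the fiber cone Gröbner basis, which here is guaranteed because $J'$ itself has a quadratic Gröbner basis — in fact the fiber ring $K[u_{C'_1}t,\ldots,u_{C'_q}t]$ is the vertex cover algebra of a graph arising from a clique multi-whiskering, and one can check the generating set $\{u_{C'_\ell}\}$ is sortable, so Theorem~\ref{Gbasis} applies), then every power $I^k$ has linear quotients, and in particular is componentwise linear. The argument for "linear quotients" proceeds by fixing $k$, taking the generators $u_{C'_{\ell_1}}\cdots u_{C'_{\ell_k}}$ of $I^k$ and ordering them by the monomial order induced from $<$ on the degree-$k$ part (in $t$) of the Rees algebra; the colon ideals $(\text{earlier generators}) : (\text{a given generator})$ are then shown to be generated by variables using exactly the initial monomials of the Gröbner basis elements supplied by the $x$-condition. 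Since $I$ is a monomial ideal, "componentwise linear" follows from "linear quotients" by the standard result of Herzog–Hibi (monomial ideals with linear quotients are componentwise linear).

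The main obstacle, and the only place where real work is needed beyond quoting \cite{HHM}, is verifying that the hypotheses of the relevant theorem from \cite{HHM} are met in full: namely that the Gröbner basis $\mathcal{G}\cup\mathcal{G}'$ produced in Lemma~\ref{x condition} is not merely a Gröbner basis but interacts correctly with the $t$-grading so that the colon-ideal computation at each step of the linear-quotient filtration yields an ideal generated in degree one. This requires noting that the binomials $f_{j,a}=x_jy_a - z_1^{(i)}\cdots z_{r_i}^{(i)}y_b$ have $y$-degree one and leading term $x_j y_a$ with the single variable $x_j$ in front, while the elements of $\mathcal{G}$ involve only $y$-variables; so when passing to $I^k$, multiplying a generator corresponding to $(C'_{\ell_1},\ldots,C'_{\ell_k})$ into an earlier one introduces colon generators that are single variables $x_j$ (or products $z_1^{(i)}\cdots z_{r_i}^{(i)}$, which however must be handled: these are not single variables, so one must argue that such a relation can always be refined to a linear colon generator using the combinatorial structure of the vertex covers — specifically that replacing $C_a$ by $C_a\cup\{x_j\}$ and iterating stays within the generator set). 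I would phrase the proof as: "By Lemma~\ref{x condition} the ideal $J_{G^{\pi}(r_1,\ldots,r_m)}$ satisfies the $x$-condition with respect to $<$. Since moreover the fiber ring is the toric ring attached to a sortable set of monomials (hence has a quadratic Gröbner basis by Theorem~\ref{Gbasis}), the hypotheses of \cite[Theorem ...]{HHM} are fulfilled, and we conclude that all powers of $I_{G^{\pi}(r_1,\ldots,r_m)}$ have linear quotients; by \cite[...]{HHO} a monomial ideal with linear quotients is componentwise linear." — filling in the precise citation numbers from \cite{HHM} and \cite{HHO} and, if \cite{HHM} requires it, spelling out the short verification that the generators $u_{C'_\ell}$ form a set to which the cited criterion applies.
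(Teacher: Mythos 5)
Your overall strategy (Lemma~\ref{x condition} plus the machinery of \cite{HHM}) is the right starting point and is the same as the paper's, but there is a genuine gap: you never address the step that constitutes essentially all of the paper's actual proof. What the $x$-condition yields — via the \emph{proof} of \cite[Theorem 2.3]{HHM}, not an off-the-shelf statement — is a system of generators $h_1<\cdots<h_s$ of $(I_{G^{\pi}(r_1,\ldots,r_m)})^k$, each of the form $h_i=u_{C'_{i_1}}\cdots u_{C'_{i_k}}$ with $h_i^*=y_{i_1}\cdots y_{i_k}$ a standard monomial, admitting an order of linear quotients; but this system is a priori \emph{not minimal}, and the passage from linear quotients to componentwise linearity is made with respect to the minimal generating set. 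The paper therefore proves that $\{h_1,\ldots,h_s\}$ is in fact the minimal generating set: assuming $h_j=wh_i$ with $w=w_xw_z$, it first excludes $z$-variables from $w$ by comparing, for a fixed clique $V_p$, the number of covers among the $C'_{i_t}$ (respectively $C'_{j_{t'}}$) containing $z_k^{(p)}$ with the total degree of $h_i$ (respectively $h_j$) in the variables of $V_p$, using that $z_k^{(p)}\in C'$ exactly when $V_p\nsubseteq C$; it then excludes $x$-variables via the exchange relation $x_tu_{C'_{i_a}}=z_1^{(t)}\cdots z_{r_t}^{(t)}u_{C'_b}$, which reduces to the previous case; finally $h_i=h_j$ with $i\neq j$ would force $h_i^*-h_j^*\in J_{G^{\pi}(r_1,\ldots,r_m)}$, contradicting that both are standard monomials. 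Nothing in your proposal performs, or substitutes for, this minimality argument; ``verifying the hypotheses of \cite{HHM} in full'' is precisely this missing piece, not the item you focus on.

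Two further inaccuracies. The extra hypothesis you propose to verify through sortability of $\{u_{C'_1},\ldots,u_{C'_q}\}$ is neither established (the paper proves sortability of cover ideals only for proper interval graphs, and a clique multi-whiskering of an arbitrary $G$ need not be one) nor needed: Lemma~\ref{x condition} works with the reduced Gr\"obner basis $\mathcal{G}$ of the fiber ideal $J'$ with respect to $<'$ without any structural claim about it, so Theorem~\ref{Gbasis} plays no role here. Also, your description of the $x$-condition is inverted: it bounds the degree of the initial monomials of the Gr\"obner basis in the ring variables $x,z$ (degree at most one, as for the leading terms $x_jy_a$ of the binomials $f_{j,a}$; the elements of $\mathcal{G}$ contain no $x,z$ at all), not the degree in the $y$-variables, and under the order $<$ of (\ref{monomialorder}) the $y$-variables dominate the $x$- and $z$-variables, not the other way around. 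Relatedly, the worry about the trailing factors $z_1^{(i)}\cdots z_{r_i}^{(i)}$ is a red herring, since only initial terms enter the colon computations in the argument of \cite{HHM}.
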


\begin{proof}
By Lemma~\ref{x condition}, $J_{G^{\pi}(r_1,\ldots,r_m)}$ satisfies the $x$-condition with respect to $<$. Hence by the proof of \cite[Theorem 2.3]{HHM}, for any positive integer $k$, the ideal $(I_{G^{\pi}(r_1,\ldots,r_m)})^k$  has a system of generators $h_1,\ldots,h_s$ (not necessarily minimal), which possess an order of linear quotients $h_1<\cdots<h_s$ and that each of these generators is of the form $h_i=u_{C'_{i_1}}\cdots u_{C'_{i_k}}$  such that $h_i^*=y_{i_1}\cdots y_{i_k}$ is a standard monomial of $T$ with respect to $<$.

We claim that $\{h_1,\ldots,h_s\}$ is indeed the minimal generating set of $(I_{G^{\pi}(r_1,\ldots,r_m)})^k$. To this aim it is enough to show that if $h_j=wh_i$ for some integers $i$ and $j$ and some monomial $w\in S$, then $w=1$ and $i=j$.

Let $h_i=u_{C'_{i_1}}\cdots u_{C'_{i_k}}$ and $h_j=u_{C'_{j_1}}\cdots u_{C'_{j_k}}$. Write $w=w_xw_z$, where $w_x$ is a monomial in $K[x_1,\ldots,x_n]$ and $w_z$ is a monomial in $K[z_k^{(i)},\ 1\leq i\leq m ,\ 1\leq k\leq r_i]$.
Let $w_z\neq 1$ and $z_k^{(p)}|w_z$. Let $d$ denote the number of integers $t$ for which $z_k^{(p)}\in C'_{i_t}$ and $d'$ denote the number of integers $t'$ for which  $z_k^{(p)}\in C'_{j_{t'}}$. Then $d<d'$. Let $V_p=\{x_{p_1},\ldots,x_{p_s}\}$, and let $a_{\ell}\geq 0$ be the biggest integer for which $x_{p_{\ell}}^{a_{\ell}}|h_i$ and
$a'_{\ell}\geq 0$ that for which $x_{p_{\ell}}^{a'_{\ell}}|h_j$. Then $\sum_{\ell=1}^s a_{\ell}>\sum_{\ell=1}^s a'_{\ell}$. This contradicts to $h_j=wh_i$. Hence $w_z=1$.

Let $w_x\neq 1$ and $x_t|w$ for some integer $t$. Then $x_t\notin C_{i_{a}}$
for some $1\leq a\leq k$. Thus for $C_b=C_{i_a}\cup\{x_t\}$ we have $x_tu_{C'_{i_a}}=z_1^{(t)}\cdots z_{r_t}^{(t)}u_{C'_b}$.
Since $x_t| w$ and $u_{C'_{i_a}}|h_i$, we may write $wh_i=w'h'$, where $w'=(w/x_t)z_1^{(t)}\cdots z_{r_t}^{(t)}$ and $h'=(h_i/u_{C'_{i_a}})u_{C'_b}$. Therefore
$h_j=w'h'$ with $w'_z\neq 1$. This is impossible similar to the discussion in the previous paragraph. Thus $w_x=1$.
So we have $h_j=h_i$. If $i\neq j$, then this implies that $h_j^*-h_i^*\in J_{G^{\pi}(r_1,\ldots,r_m)}$. So either $h_i^*$ or $h_j^*$ belongs to $\ini_<(J_{G^{\pi}(r_1,\ldots,r_m)})$, a contradiction. Hence $i=j$, as desired.
\end{proof}

\end{document}